\theoremstyle{plain}
\newtheorem{thm}{Theorem}[section]
\newtheorem{lem}[thm]{Lemma}
\newtheorem{pro}[thm]{Proposition}
\newtheorem{cor}[thm]{Corollary}
\theoremstyle{remark}\theoremstyle{remark}
\newtheorem*{rem*}{Remark}
\theoremstyle{definition}
\numberwithin{equation}{section}
\newcommand{\Z}{\mathbb{Z}}   
\newcommand{\Q}{\mathbb{Q}}
\newcommand{\N}{\mathbb{N}}
\DeclareMathOperator{\ad}{ad}
\DeclareMathOperator{\id}{id}
\DeclareMathOperator{\Cen}{C}
\newcommand{\ackn}{  \noindent{\sc Acknowledgement }\hspace{5pt} }
\begin{document}

\title[Pro-$p$ groups with constant generating number]{Pro-$p$ groups
  with constant generating number on open subgroups}

\author{Benjamin Klopsch}
\address{Department of Mathematics \\
  Royal Holloway, University of London \\
  Egham TW20 0EX, UK}
\email{Benjamin.Klopsch@rhul.ac.uk}

\author{Ilir Snopce}
\address{Department of Mathematics\\
  University of Virginia \\
  Charlottesville, Virginia 22904-4137, USA }
\email{snopce@math.binghamton.edu}

\begin{abstract}
  Let $p$ be a prime.  We classify finitely generated pro-$p$ groups
  $G$ which satisfy $d(H) = d(G)$ for all open subgroups $H$ of $G$.
  Here $d(H)$ denotes the minimal number of topological generators for
  the subgroup~$H$.  Within the category of $p$-adic analytic pro-$p$
  groups, this answers a question of Iwasawa.
\end{abstract}

\subjclass[2000]{20E18, 22E20}

\maketitle

\section{Introduction}

Let $p$ be a prime.  Motivated by a question of Kenkichi Iwasawa,
which we discuss below, we consider profinite groups $G$ satisfying
the condition
\begin{equation}\tag{$*$}\label{equ:star}
  d(H) = d(G) \quad \text{for all open subgroups $H$ of $G$.}
\end{equation}
Here $d(H)$ denotes the minimal number of topological generators for
the subgroup~$H$.  For lack of a shorter description, we say that
profinite groups $G$ with the property \eqref{equ:star} have
\emph{constant generating number on open subgroups}.  The aim of this
paper is to give a complete classification of finitely generated
pro-$p$ groups with constant generating number on open subgroups.

\begin{thm}\label{thm:main_theorem}
  Let $G$ be a finitely generated pro-$p$ group and let $d :=
  d(G)$. Then $G$ has constant generating number on open subgroups if
  and only if it is isomorphic to one of the groups in the following
  list:

  \begin{enumerate}
  \item the abelian group $\Z_p^d$, for $d \geq 0$;
  \item the metabelian group $\langle y \rangle \ltimes A$, for $d
    \geq 2$, where $\langle y \rangle \cong \Z_p$, $A \cong
    \Z_p^{d-1}$ and $y$ acts on $A$ as scalar multiplication by
    $\lambda$, with $\lambda = 1+p^s$ for some $s \geq 1$, if $p>2$,
    and $\lambda = \pm (1 + 2^s)$ for some $s \geq 2$, if $p=2$;
  \item the group $\langle w \rangle \ltimes B$ of maximal class, for
    $p=3$ and $d=2$, where $\langle w \rangle \cong C_3$, $B = \Z_3 +
    \Z_3 \omega \cong \Z_3^2$ for a primitive $3$rd root of unity
    $\omega$ and where $w$ acts on $B$ as multiplication by $\omega$;
  \item the metabelian group $\langle y \rangle \ltimes A$, for $p=2$
    and $d \geq 2$, where $\langle y \rangle \cong \Z_2$, $A \cong
    \Z_2^{d-1}$ and $y$ acts on $A$ as scalar multiplication by $-1$.
   \end{enumerate}
\end{thm}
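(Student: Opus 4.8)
The plan is to prove both implications, handling the easy ``if'' direction family by family and attacking the ``only if'' direction by reducing $(*)$ to a Lie-theoretic statement that forces the action of $G$ on an abelian section to be scalar. For ``if'', case (1) is immediate since every open subgroup of $\Z_p^d$ is again free abelian of rank $d$, so $d(H)=d$. For the semidirect products (2) and (4) the key is that the \emph{scalar} action is robust: if $y$ acts on $A\cong\Z_p^{d-1}$ as multiplication by $\lambda$, then for any open subgroup $H$ the relevant commutators are produced by applying $\lambda-1$, which is divisible by $p$ and acts identically in every coordinate; one checks $[H,H]$ falls inside $H^p$ relative to $\Phi(H)$, so $d(H)=\dim H=d$. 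Case (3) I would settle by a direct Frattini-quotient computation inside the maximal-class group over $\Z_3$.

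For ``only if'' I would first force $G$ to be $p$-adic analytic: condition $(*)$ bounds $d(H)\le d$ for all open $H$, so $G$ has finite upper rank, and by the Lubotzky--Mann theory a finitely generated pro-$p$ group of finite upper rank is $p$-adic analytic. Passing to a uniform open subgroup $U$ and using $d(U)=\dim U=\dim G$ for uniform groups, $(*)$ yields $\dim G=d$. For $p$ odd and $G$ torsion-free I then reformulate $(*)$: since every open $H$ satisfies $d(H)=\dim H$, every open $H$ is uniform, so the problem becomes the classification of $p$-adic analytic pro-$p$ groups \emph{all of whose open subgroups are powerful}. In the $\Z_p$-Lie lattice $L=\log U$ with $\mathfrak g=\Q_p\otimes L$ this translates into the statement that every finite-index subalgebra $M\subseteq L$ satisfies $[M,M]\subseteq pM$.

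The heart of the argument, and the step I expect to be the main obstacle, is to show that this lattice condition forces $\mathfrak g$ to be either abelian or of the form $\Q_p y\oplus\mathfrak a$ with $\mathfrak a$ an abelian ideal on which $\ad y$ acts as a scalar. The mechanism is a \emph{shearing} construction: if $\ad y$ is not scalar on $\mathfrak a$, or if $\mathfrak g$ contains a genuine nilpotent piece (as in a scaled Heisenberg lattice with $[x,y]=pz$), then one can produce a sheared finite-index sublattice $M$ in which some bracket $[u,v]$ is congruent to a unit multiple of a basis vector modulo $pM$; then $M$ is not powerful and the corresponding open $H$ has $d(H)<d$, contradicting $(*)$. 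Conversely the scalar case survives every shear because $\ad y=p^{s}\cdot(\text{unit}\cdot\id)$ carries each sublattice $\Lambda\subseteq\mathfrak a$ into $p\Lambda$. A short supplementary argument---forming a $\Z_p$-combination of candidate generators that centralizes $\mathfrak a$---reduces the non-abelian part to a single generator, producing the codimension-one abelian ideal $A$.

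Finally I would assemble the families and treat the exceptional primes, which is where the genuinely group-theoretic (as opposed to Lie-theoretic) subtleties live. For $p$ odd and torsion-free this yields (1) and (2), the constraint $\lambda=1+p^{s}$ with $s\ge 1$ coming from requiring $\lambda$ to be a pro-$p$ automorphism of $A$. The prime $p=2$ is different precisely because the logarithm kills the action $\lambda=-1$ (so that group has \emph{abelian} Lie algebra $\Q_2^{d}$ yet is non-abelian): the analytic method cannot see it, one must argue at the group level, and this is the source of the extra family (4), while the $p=2$ powerfulness bookkeeping forces $s\ge 2$ in (2). The torsion case is likewise invisible to the Lie algebra of the torsion-free open subgroup $B\cong\Z_3^2$, and it is here that the maximal-class group (3) appears; an order-$3$ automorphism acting as a primitive cube root of unity is the unique nontrivial finite scalar action compatible with $(*)$, and I would exclude all other finite actions by applying the same shearing analysis to a torsion-free open subgroup.
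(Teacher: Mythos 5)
Your outline captures the easy reductions (finite rank, $p$-adic analyticity, $\dim G = d$, and the shape of the answer in the saturable torsion-free case), but it has two genuine gaps at exactly the points where the real work lies. First, the reformulation of $(*)$ as ``every open subgroup is uniform'', equivalently ``every finite-index sublattice $M$ of $L$ satisfies $[M,M]\subseteq pM$'', is not justified and is not a safe starting point: the implication $d(H)=\dim H\Rightarrow H$ powerful is false in general (the maximal class group in item (3) satisfies $d=\dim=2$ but $[G_0,G_0]=\pi B\not\subseteq G_0^{\,3}$), and even under your torsion-free hypothesis it is an assertion that itself essentially requires the classification you are trying to prove. The paper avoids this by working with the weaker, directly verifiable statement $d(\mathcal{L})=\dim_{\Q_p}\mathcal{L}$ for the rational Lie algebra (Proposition~\ref{pro:sat_groups}): if $d(\mathcal{L})<\dim\mathcal{L}$ one exhibits an open subgroup generated by fewer than $d$ elements, and the structure of $\mathcal{L}$ then comes from the Lubotzky--Mann lemma that such an algebra is metabelian, followed by a short centraliser argument. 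Your ``shearing'' construction, which you yourself flag as the main obstacle, is a plausible substitute for this but is nowhere carried out, and as stated it would need to rule out, e.g., a non-scalar semisimple $\ad y$ with all eigenvalues of equal valuation --- a case where no single shear obviously produces a non-powerful sublattice, whereas the $\Q_p$-rational count $d(\mathcal{M})<\dim\mathcal{M}$ disposes of it immediately.

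Second, and more seriously, the extension step --- passing from a known open normal subgroup $H$ of index $p$ to $G=\langle z\rangle H$ --- is where almost all of the content of the theorem for $p=2$ and $p=3$ resides, and your proposal replaces it with ``apply the same shearing analysis to a torsion-free open subgroup''. This cannot work: the Lie lattice of a torsion-free open subgroup of $G$ does not see the action of the torsion element $z$ nor the coset in which $z^p$ lies, and it is precisely these data that separate the surviving families from the contradictions. For instance, when $p=2$ and $H\cong\Z_2^d$, the six configurations (T~2.1)--(T~2.6) of the $\Z_2\langle \overline{z}\rangle$-module $H$ all have the same abelian torsion-free open subgroups, yet one yields $\Z_2^d$, one yields family (4), and four are impossible --- the distinctions being made by the Heller--Reiner decomposition of $H$ into indecomposable $\Z_p\langle z\rangle$-lattices (Lemma~\ref{lem:module} and Table~\ref{table}) together with group-level bookkeeping such as $\log_2\lvert G:G^2\rvert=d$ and the location of $z^2$. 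The same applies to the $p=3$ analysis (where the fixed-point-free action forces $z^3=1$ and produces family (3)) and to the Klein four-group configuration in Case~2 for $p=2$. Without an argument of this module-theoretic kind, the ``only if'' direction for the exceptional primes, and in particular the completeness of the list, is not established.
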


This generalises the partial results in \cite{Sn09}, which were
obtained by quite different methods.  Thematically our work is linked
to other recent results on generating numbers of pro-$p$ groups; e.g.\
see~\cite{GoKl_pre,Kl_pre}.  The metabelian groups resulting from our
classification can easily be realised as subgroups of the affine group
$\textup{Aff}_d(\Z_p) = \textup{GL}_d(\Z_p) \ltimes \Z_p^d$.


In view of our theorem, it is an interesting problem to investigate
more general classes of finitely generated profinite groups with
respect to the property \eqref{equ:star}, e.g.\ compact $p$-adic
analytic groups.  We conclude this introduction with a short
motivation and a sketch of the proof of
Theorem~\ref{thm:main_theorem}.

\smallskip

It is well known that the (topological) Schreier index formula can be
used to characterise free pro-$p$ groups: a finitely generated pro-$p$
group $G$ is a free pro-$p$ group if and only if $d(H)-1 = \lvert G :
H \rvert (d(G)-1)$ for every open subgroup $H$ of $G$;
cf.~\cite{Lu82}.  For every positive integer $n$, let $\mathscr{E}_n$
denote the class of all finitely generated pro-$p$ groups $G$
satisfying:
\begin{equation*}
  d(H)-n = \lvert G : H \rvert (d(G)-n) \quad \text{for all open
  subgroups $H$ of $G$}.
\end{equation*}
For instance, the class $\mathscr{E}_1$ consists precisely of the
finitely generated free pro-$p$ groups.  As described in \cite{DuLa83},
Iwasawa observed that, for fixed $n$, pro-$p$ groups belonging to the
class $\mathscr{E}_n$ have interesting representation-theoretic
properties and he raised the question of determining the groups
belonging to $\mathscr{E}_n$ for each $n > 1$.  In \cite{DuLa83},
Dummit and Labute answer Iwasawa's question for $n=2$ in the case of
one-relator groups: $G$ is a Demushkin group if and only if $G$ is a
one-relator group and $G \in \mathscr{E}_2$.

Note that $\mathscr{E}_n$ is non-empty for every $n$ because, clearly,
it contains the free abelian pro-$p$ group $\Z_p^n$ of rank $n$.  In
\cite{Ya07}, Yamagishi remarked that no other examples are known to
him when $n \geq 3$.  Recently, for $p>3$, the second author
determined all $p$-adic analytic pro-$p$ groups that belong to the
class $\mathscr{E}_3$; see \cite{Sn09}.  His approach relied on the
classification of $3$-dimensional soluble $\Z_p$-Lie lattices provided
in \cite{GoKl09}.

Theorem~\ref{thm:main_theorem} answers completely the question of
Iwasawa for pro-$p$ groups of finite rank, or equivalently $p$-adic
analytic pro-$p$ groups.  Indeed, it is easy to see that such a group
$G$ belongs to $\mathscr{E}_n$ if and only if it has constant
generating number on open subgroups and $n = \dim(G)$.  Here $\dim(G)$
denotes the dimension of $G$ as a $p$-adic Lie group.  We record this
consequence as

\begin{cor}
  Let $G$ be a $p$-adic analytic pro-$p$ group and let $n \in \N$.
  Then $G$ belongs to the class $\mathscr{E}_n$ if and only if $G$ is
  isomorphic to one of the groups listed in
  Theorem~\ref{thm:main_theorem} and $n = \dim(G)$.
\end{cor}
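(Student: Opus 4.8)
The plan is to prove the Corollary directly from the Main Theorem together with the standard dimension theory of $p$-adic analytic pro-$p$ groups. The statement to be established is the equivalence: a $p$-adic analytic pro-$p$ group $G$ lies in $\mathscr{E}_n$ if and only if $G$ appears in the list of Theorem~\ref{thm:main_theorem} and $n=\dim(G)$. The key observation, flagged already in the text, is that for a group of \emph{finite rank} the relation $d(H)-n = \lvert G:H\rvert(d(G)-n)$ is only satisfiable with the trivial scaling $\lvert G:H\rvert = 1$ forced on each factor, because $d(H)$ stays bounded as $H$ ranges over open subgroups, whereas $\lvert G:H\rvert$ is unbounded. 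This collapses the $\mathscr{E}_n$ condition to constant generating number plus a single numerical identification of $n$.

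First I would recall that $p$-adic analytic pro-$p$ groups are precisely the finitely generated pro-$p$ groups of finite rank, so that $\sup_H d(H) < \infty$; this is the input from Lazard's theory that I would cite rather than reprove. Then I would argue the forward direction: suppose $G\in\mathscr{E}_n$. Taking $H=G$ in the defining relation gives $0=0$, which is vacuous, so instead I would take a nested chain of open subgroups with $\lvert G:H\rvert \to \infty$ and observe that if $d(G)\neq n$ the right-hand side $\lvert G:H\rvert(d(G)-n)$ would be unbounded in absolute value while the left-hand side $d(H)-n$ is bounded, a contradiction. Hence $d(G)=n$, and feeding this back into the relation forces $d(H)=n=d(G)$ for every open $H$, i.e.\ $G$ has constant generating number on open subgroups. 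By Theorem~\ref{thm:main_theorem}, $G$ is then one of the listed groups, and since each listed group is $p$-adic analytic with $d(G)=\dim(G)$ — which I would verify by the explicit descriptions (for $\Z_p^d$ and the semidirect products the dimension equals the topological rank $d$) — we get $n=d(G)=\dim(G)$.

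For the converse I would take $G$ isomorphic to a group in the list with $n=\dim(G)$. Theorem~\ref{thm:main_theorem} gives that $G$ has constant generating number, so $d(H)=d(G)$ for all open $H$. The defining relation of $\mathscr{E}_n$ then reduces to $d(G)-n = \lvert G:H\rvert(d(G)-n)$, which holds for all $H$ precisely when $d(G)=n$; so it remains to confirm $d(G)=n=\dim(G)$ for each family in the list. Here I would check case by case that $\dim(G)=d(G)$: for $\Z_p^d$ this is immediate; for the metabelian groups $\langle y\rangle\ltimes A$ with $A\cong\Z_p^{d-1}$ the dimension is $1+(d-1)=d=d(G)$; and similarly for the maximal-class group in item~(3), where $B\cong\Z_3^2$ and the finite top group $C_3$ contributes $0$ to the dimension, giving $\dim(G)=2=d(G)$.

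The main obstacle is not conceptual but bookkeeping: one must confirm in each case of Theorem~\ref{thm:main_theorem} that the minimal number of generators $d(G)$ coincides with the $p$-adic dimension $\dim(G)$, and in particular handle item~(3) carefully, where the top group is the finite cyclic group $C_3$ rather than a copy of $\Z_p$, so the dimension comes entirely from $B$ while $d(G)=2$ still matches $\dim(G)=2$. Once this numerical coincidence $d(G)=\dim(G)$ is verified uniformly across the list, both implications follow formally from the boundedness argument and Theorem~\ref{thm:main_theorem}, and there is no further analytic input required beyond the identification of finite-rank pro-$p$ groups with $p$-adic analytic ones.
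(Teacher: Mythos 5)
Your argument is correct and is essentially the paper's own (the paper only offers the one-line remark that $G\in\mathscr{E}_n$ iff $G$ has constant generating number on open subgroups and $n=\dim(G)$, which your boundedness argument and case-by-case check that $d(G)=\dim(G)$ simply flesh out). The only caveat — shared with the paper, which glosses over it — is that your step ``$\lvert G:H\rvert\to\infty$ while $d(H)$ stays bounded'' presupposes $G$ is infinite; for finite $G$ the argument (and in fact the statement, e.g.\ $C_2\in\mathscr{E}_2$) breaks down, so strictly one should either exclude finite groups or dispose of them separately.
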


\smallskip

Our proof of Theorem~\ref{thm:main_theorem} proceeds as follows.  By a
Lie theoretic argument, we first deal with saturable pro-$p$ groups:
in this special situation we only obtain groups listed in (1) and (2)
of the theorem; see Corollary~\ref{cor:sat_groups}.  A general
$p$-adic analytic pro-$p$ group $G$ contains a saturable normal
subgroup of finite index.  By induction on the index, it suffices to
study the case where $G$ contains an open normal subgroup $H$ of
`known type' -- e.g.\ saturable, and hence listed in (1) or (2) of the
theorem -- such that $G/H \cong C_p$.  If $H$ is abelian, we consider
$H$ as a $\Z_p[G/H]$-module, and the minimal number of generators
$d(\mathcal{M})$ for the $\Q_p[G/H]$-module $\mathcal{M} := \Q_p
\otimes H$ becomes a key invariant.  If $H$ is not abelian, we
consider the actions of $G/H$ and $H/A$ on an abelian characteristic
subgroup $A$ of co-dimension $1$ in $H$.  While the details of the
argument work out smoothly for $p \geq 5$, a more careful analysis is
required to deal the small primes $p=2$ and $p=3$.

\bigskip

\noindent \textit{Notation.}  Throughout the paper, $p$ denotes a
prime.  We write $\N$ for the set of natural numbers and $\N_0 := \N
\cup \{0\}$.  The $p$-adic integers and $p$-adic numbers are denoted
by $\Z_p$ and $\Q_p$.  The minimal number of generators of a group $G$
is denoted by $d(G)$, and a similar notation is employed for other
algebraic structures, such as Lie algebras, Lie lattices and modules.
Moreover, we tacitly interpret generators as topological generators as
appropriate.


\section{Saturable pro-$p$ groups and preliminaries}

Let $G$ be a finitely generated pro-$p$ group with constant generating
number on open subgroups.  Then $G$ has finite rank and admits the
structure of a $p$-adic analytic group (in a unique way); cf.\
\cite[Chapters 8 and 9]{DidSMaSe99}.  There is a normal open subgroup
$H$ of $G$ which is uniformly powerful or, more generally, saturable.
This group $H$ corresponds via Lie theory to a $\Z_p$-Lie lattice
$L_H$ which is powerful (respectively saturable); see
\cite{DidSMaSe99,Kl05,GoKl09}.  At the level of saturable pro-$p$
groups the group theoretic property we are interested in translates
readily to an equivalent condition on the associated Lie lattice.

\begin{pro}\label{pro:sat_groups}
  Let $G$ be a saturable pro-$p$ group with associated $\Z_p$-Lie
  lattice $L = L_G$.  Then $G$ has constant generating number on open
  subgroups if and only if the $\Q_p$-Lie algebra $\mathcal{L} := \Q_p
  \otimes_{\Z_p} L$ satisfies $d(\mathcal{L}) = \dim_{\Q_p}
  \mathcal{L}$.
\end{pro}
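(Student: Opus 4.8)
The plan is to exploit the saturable Lazard--Lie correspondence in order to translate the group-theoretic condition into a statement about the full-rank sublattices of $L$, and then to pin down the possible values of $d$ by two elementary inequalities. First I would recall the features of the correspondence $G \leftrightarrow L = L_G$ that are needed (cf.\ \cite{Kl05,GoKl09,DidSMaSe99}): the exponential map identifies $G$ and $L$ as topological spaces, one has $\dim_{\Q_p}\mathcal{L} = \operatorname{rank}_{\Z_p} L =: n$ equal to the dimension of $G$, and $d(G) = d(L)$, where $d(L)$ denotes the minimal number of Lie-lattice generators. Moreover every open subgroup $H \le G$ is again saturable --- for $p$ odd this follows from the fact that a group of finite rank is saturable precisely when it is torsion-free, a property inherited by subgroups (see \cite{GoKl09}), while $p = 2$ requires the extra care already announced in the introduction --- and under the correspondence $H$ matches the full-rank sublattice $M_H = \log H \subseteq L$, with $d(H) = d(M_H)$ and $\Q_p \otimes_{\Z_p} M_H = \mathcal{L}$.

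The engine of the proof is the following pair of bounds, valid for every full-rank sublattice $M \subseteq L$: since $M$ is a free $\Z_p$-module of rank $n$, it is generated by $n$ elements, so $d(M) \le n$; and since a Lie-lattice generating set of $M$ becomes a Lie-algebra generating set of $\Q_p \otimes_{\Z_p} M = \mathcal{L}$ after extension of scalars, we obtain $d(M) \ge d(\mathcal{L})$. Thus every full-rank sublattice satisfies $d(\mathcal{L}) \le d(M) \le n$, and the same then holds for $d(G) = d(L)$ and for each $d(H) = d(M_H)$. Consequently $G$ has constant generating number on (saturable) open subgroups exactly when the quantity $d(M_H)$ is forced to be constant across all open $H$, that is, precisely when the two extreme values $d(\mathcal{L})$ and $n$ coincide.

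It therefore remains to show that both extremes are genuinely attained by open subgroups. The maximum is easy: the subgroup $G^p$ (respectively $G^4$ when $p=2$) corresponds to $pL$ (respectively $4L$), which is powerful and hence saturable; since $[pL,pL] = p^2[L,L] \subseteq p \cdot pL$, the relevant Frattini quotient is all of $pL/p^2L \cong L/pL$, so that $d(G^p) = n$. For the minimum, choose a minimal Lie-algebra generating set $v_1,\dots,v_m$ of $\mathcal{L}$ with $m = d(\mathcal{L})$ and, after scaling, with $v_i \in L$; then the closed subgroup $H = \overline{\langle \exp v_1,\dots,\exp v_m\rangle}$ has $\Q_p$-Lie algebra equal to the subalgebra generated by the $v_i$, namely all of $\mathcal{L}$, so $H$ is open, while the bounds $d(H) \le m$ and $d(H) = d(M_H) \ge d(\mathcal{L}) = m$ force $d(H) = d(\mathcal{L})$.

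Putting this together gives the equivalence. If $d(\mathcal{L}) = n$, then $d(H) = n = d(G)$ for every saturable open $H$, yielding property \eqref{equ:star} once the possibly non-saturable open subgroups at $p=2$ have been disposed of; conversely, if $d(\mathcal{L}) < n$, then the two open subgroups constructed above have distinct generating numbers $m < n$, so \eqref{equ:star} fails. I expect the Lie-theoretic core to be entirely routine once the two displayed inequalities are in place; the main obstacle, and the only step demanding real care, is the correspondence bookkeeping for $p = 2$, namely ensuring that all open subgroups --- and the two witnessing subgroups in particular --- are saturable, so that the identity $d(\cdot) = d(\log(\cdot))$ may legitimately be applied.
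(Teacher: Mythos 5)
Your argument has a genuine gap, and it sits exactly where the paper warns the difficulty lies. You assert that every open subgroup of a saturable pro-$p$ group is again saturable, justifying this for odd $p$ by the claim that a $p$-adic analytic pro-$p$ group is saturable precisely when it is torsion-free. That equivalence is false: the result in \cite{GoKl09} only guarantees saturability of torsion-free groups of dimension \emph{less than} $p$, and in general subgroups of saturable pro-$p$ groups need not be saturable (the paper states this explicitly right after the proposition). Since your entire bookkeeping rests on the identity $d(H) = d(M_H)$ for \emph{every} open subgroup $H$, the converse implication --- $d(\mathcal{L}) = \dim_{\Q_p}\mathcal{L}$ forces constant generating number --- is not established: for a non-saturable open $H$ you have no a priori control on $d(H)$, and this problem is not confined to $p=2$ as you suggest. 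The paper avoids this by first classifying the possible lattices $L$ with $d(\Q_p \otimes L) = \dim L$ (Lemma 2.2 and Corollary~\ref{cor:Lie_lattice}: $L$ is abelian or $\Z_p x \oplus A$ with $\ad(x)$ acting as $p^s$), which makes $G$ completely explicit, and then verifies directly that all open subgroups of these concrete groups have the right number of generators. Some substitute for that structural step is needed; the two inequalities $d(\mathcal{L}) \le d(M) \le n$ alone do not suffice.

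Your forward direction is essentially the paper's argument and is sound in outline: from $m = d(\mathcal{L}) < n$ you produce an $m$-generated open subgroup, while a uniform open subgroup needs $n$ generators. Two points there still deserve care. First, the openness of $H = \overline{\langle \exp v_1, \dots, \exp v_m \rangle}$ is not immediate from ``its Lie algebra is all of $\mathcal{L}$''; the paper proves it by locating an open saturable subgroup $K \le H$ whose associated lattice contains $p^k v_1, \dots, p^k v_m$ and is therefore open in $L$. Second, note that this direction does \emph{not} actually require $H$ itself to be saturable --- only $d(H) \le m$ and the existence of some open subgroup with $n$ generators --- so the saturability issue is harmless here; it is only in the converse that it is fatal.
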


The proof of this plausible result is not completely straightforward,
because subgroups of saturable pro-$p$ groups are not necessarily
saturable.  We begin by proving an auxiliary lemma.

\begin{lem}
  Let $\mathcal{L}$ be a finite-dimensional Lie algebra over a field
  $F$ such that $d(\mathcal{L}) = \dim_F(\mathcal{L})$.  Then either
  $\mathcal{L}$ is abelian or $\mathcal{L}$ is metabelian of the form
  $\mathcal{L} = F x \oplus \mathcal{A}$, where $\mathcal{A}$ is an
  abelian ideal of co-dimension $1$ in $\mathcal{L}$ and $\ad(x)
  \vert_{\mathcal{A}} = \id_{\mathcal{A}}$.
\end{lem}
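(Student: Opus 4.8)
The plan is to reformulate the extremal hypothesis $d(\mathcal{L}) = \dim_F \mathcal{L}$ as a statement about hyperplanes, and then read off the Lie structure from it. Since any vector-space basis generates $\mathcal{L}$ as a Lie algebra, one always has $d(\mathcal{L}) \le \dim_F \mathcal{L}$; and a generating set of size smaller than $\dim_F \mathcal{L}$ spans a proper subspace. Hence the equality $d(\mathcal{L}) = \dim_F \mathcal{L}$ holds if and only if no proper subspace of $\mathcal{L}$ generates it as a Lie algebra. From this I would extract the combinatorial condition that drives the whole argument: \emph{every hyperplane of $\mathcal{L}$ is a subalgebra}. Indeed, if a hyperplane $H$ were not a subalgebra, then $[h_1,h_2] \notin H$ for suitable $h_1,h_2 \in H$, so $H + F[h_1,h_2] = \mathcal{L}$, and a basis of $H$ — numbering only $\dim_F \mathcal{L} - 1$ elements — would already generate $\mathcal{L}$, contradicting the hypothesis.

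Granting that every hyperplane is a subalgebra, I would next pin down the bracket. If $\dim_F \mathcal{L} \le 2$ the statement is checked directly (the only non-abelian case is the two-dimensional one, which has the asserted form after rescaling), so I assume $\dim_F \mathcal{L} \ge 3$. For linearly independent $a,b$ the plane $Fa + Fb$ is a proper subspace, hence equals the intersection of all hyperplanes containing it; as each such hyperplane is a subalgebra containing $a$ and $b$, it contains $[a,b]$. Therefore $[a,b] \in Fa + Fb$ for all $a,b \in \mathcal{L}$; in other words, every two-dimensional subspace is a subalgebra.

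The crux is to turn this into an explicit formula, and I expect this to be the main obstacle. Fixing $a$ and passing to $\mathcal{L}/Fa$, the induced map $\overline{\ad(a)}$ sends every line into itself, so it must be a scalar $\lambda_a \cdot \id$; thus $[a,b] \equiv \lambda_a b \pmod{Fa}$. Comparing this with the analogous relation for $[b,a]$ via antisymmetry yields $[a,b] = \lambda_a b - \lambda_b a$ for all independent $a,b$, and a short bilinearity check then shows that $a \mapsto \lambda_a$ is a linear functional $\lambda \in \mathcal{L}^*$, nonzero because $\mathcal{L}$ is non-abelian. This step is delicate precisely because the ``every vector is an eigenvector'' phenomenon and the antisymmetry of the bracket have to be combined carefully to produce a single functional.

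Finally I would read off the structure from $[a,b] = \lambda(a) b - \lambda(b) a$. The hyperplane $\mathcal{A} := \ker \lambda$ is an abelian ideal of codimension $1$: brackets of its elements vanish, while $[c,a] = \lambda(c) a \in \mathcal{A}$ for $a \in \mathcal{A}$ and $c \in \mathcal{L}$. Choosing $x$ with $\lambda(x) = 1$ gives $\mathcal{L} = Fx \oplus \mathcal{A}$ together with $\ad(x)\vert_{\mathcal{A}} = \id_{\mathcal{A}}$, and since $\lambda([a,b]) = 0$ the derived subalgebra lies in $\mathcal{A}$, so $\mathcal{L}$ is metabelian. This is exactly the asserted form, completing the dichotomy.
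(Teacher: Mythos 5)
Your proof is correct, but it takes a genuinely different route from the paper's. The paper imports metabelianity from \cite[Lemma~4.3]{LuMa87}, chooses a maximal abelian subalgebra $\mathcal{A} \supseteq [\mathcal{L},\mathcal{L}]$ (which is then a self-centralising ideal), and uses the generation hypothesis only to force each $\ad(x)$ to act on $\mathcal{A}$ as a scalar, producing a character $\rho\colon \mathcal{L} \to \mathfrak{gl}_1(F)$ with kernel $\mathcal{A}$ from which $x$ is extracted. You instead distill from the hypothesis the purely linear-algebraic fact that every hyperplane --- and hence, by intersecting hyperplanes, every subspace --- is a subalgebra, so that $[a,b] \in Fa + Fb$ for all $a,b$ once $\dim_F \mathcal{L} \geq 3$; the ``every vector is an eigenvector'' argument applied to $\overline{\ad(a)}$ on $\mathcal{L}/Fa$ (well defined since $[a,a]=0$), combined with antisymmetry, then yields the explicit formula $[a,b] = \lambda(a)b - \lambda(b)a$ for a linear functional $\lambda$, from which the abelian ideal $\mathcal{A} = \ker\lambda$, the element $x$ with $\lambda(x)=1$, and metabelianity are all read off directly. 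Your route is self-contained and elementary --- it in effect recovers the classical classification of Lie algebras in which every subspace is a subalgebra --- at the cost of some case-checking in the linearity of $a \mapsto \lambda_a$ and the separate treatment of $\dim_F \mathcal{L} \leq 2$; both of these you correctly flag, and both go through, since $\dim_F \mathcal{L} \geq 3$ always allows the auxiliary vector $b$ to be chosen independent of the vectors being compared. The paper's route is shorter but leans on the external lemma from \cite{LuMa87}.
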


\begin{proof}
  Suppose that $\mathcal{L}$ is not abelian.  By
  \cite[Lemma~4.3]{LuMa87}, the Lie algebra $\mathcal{L}$ is
  metabelian, i.e.\ $[\mathcal{L},\mathcal{L}]$ is abelian.  Let
  $\mathcal{A}$ be a maximal abelian Lie subalgebra of $\mathcal{L}$
  containing $[\mathcal{L},\mathcal{L}]$.  Then $\mathcal{A}$ is an
  ideal of $\mathcal{L}$ and equal to its centraliser
  $\Cen_{\mathcal{L}}(\mathcal{A})$ in $\mathcal{L}$.  Because
  $d(\mathcal{L}) = \dim_F(\mathcal{L})$, it is clear that for every
  $x \in \mathcal{L}$ the action of $\ad(x)$ on $\mathcal{A}$ is
  scalar, and we obtain a representation $\rho: \mathcal{L}
  \rightarrow \mathfrak{gl}_1(F)$.  Since
  $\Cen_{\mathcal{L}}(\mathcal{A}) = \mathcal{A} \lneqq \mathcal{L}$,
  the sequence
  \begin{equation*}
    \begin{CD}
      0 @>>> \mathcal{A} @>\text{incl.}>> \mathcal{L} @>\rho>>
      \mathfrak{gl}_1(F) @>>> 0,
    \end{CD}
  \end{equation*}
  is exact, and we find $x \in \mathcal{L} \setminus \mathcal{A}$ such
  that $\mathcal{L} = F x \oplus \mathcal{A}$ and $\ad(x)
  \vert_{\mathcal{A}} = \id_{\mathcal{A}}$.
\end{proof}

\begin{cor}\label{cor:Lie_lattice}
  Let $L$ be a $\Z_p$-Lie lattice of dimension $d$ such that $d(\Q_p
  \otimes_{\Z_p} L) = d$.  Then one of the following holds:
 \begin{enumerate}
 \item[(i)] $L \cong \Z_p^d$ is abelian;
 \item[(ii)] $L = \Z_p x \oplus A$ where $A \cong \Z_p^{d-1}$ is an
   abelian ideal of $L$ and $\ad(x)$ acts on $A$ as multiplication by
   $p^s$ for suitable $s \in \N_0$.
 \end{enumerate}
\end{cor}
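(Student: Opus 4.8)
The plan is to pass to the $\Q_p$-Lie algebra $\mathcal{L} := \Q_p \otimes_{\Z_p} L$, apply the preceding lemma, and then descend the resulting structure back to the lattice $L$. Since $d(\mathcal{L}) = \dim_{\Q_p}\mathcal{L} = d$ by hypothesis, the lemma provides exactly two alternatives, and I would treat them in turn.

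First, suppose $\mathcal{L}$ is abelian. Then $[L,L]$ maps into $[\mathcal{L},\mathcal{L}] = 0$ under the natural embedding $L \hookrightarrow \mathcal{L}$, so $[L,L]$ is a torsion submodule of the torsion-free $\Z_p$-module $L$ and hence vanishes. Thus $L$ is an abelian $\Z_p$-Lie lattice of rank $d$, i.e.\ $L \cong \Z_p^d$, which is case~(i).

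Now suppose $\mathcal{L}$ is not abelian, so by the lemma $\mathcal{L} = \Q_p x_0 \oplus \mathcal{A}$, with $\mathcal{A}$ an abelian ideal of co-dimension $1$ and $\ad(x_0)\vert_{\mathcal{A}} = \id_{\mathcal{A}}$. I set $A := L \cap \mathcal{A}$. As the intersection of $L$ with a $\Q_p$-subspace, $A$ is a pure $\Z_p$-submodule of $L$, hence a direct summand; it has rank $\dim_{\Q_p}\mathcal{A} = d-1$, it is abelian because $\mathcal{A}$ is, and it is an ideal of $L$ since $[L,A] \subseteq [\mathcal{L},\mathcal{A}] \cap L \subseteq \mathcal{A}\cap L = A$. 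Purity lets me choose $x \in L$ with $L = \Z_p x \oplus A$. The scalar $c \in \Q_p$ by which $\ad(x)$ acts on $\mathcal{A}$ is nonzero, because $x \notin \mathcal{A} = \ker\rho$ in the notation of the lemma.

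It remains to normalise $c$. Because $A$ is a full $\Z_p$-lattice in $\mathcal{A}$ and $\ad(x)$ preserves $A$, the relation $cA = \ad(x)(A) \subseteq A$ forces $c \in \Z_p \setminus \{0\}$; writing $c = u\,p^s$ with $u \in \Z_p^{\times}$ and $s \in \N_0$ equal to the $p$-adic valuation of $c$, I replace $x$ by $u^{-1} x$. This preserves the decomposition $L = \Z_p x \oplus A$, since $u^{-1} x$ still projects to a generator of $L/A \cong \Z_p$, and now $\ad(x)$ acts on $A$ as multiplication by $p^s$, giving case~(ii). The only point requiring real care is this descent step: guaranteeing that $A$ is a genuine direct summand of $L$ and that the scalar is a non-negative power of $p$ up to a unit, which is precisely where the purity of $A$ and the integrality forced by $\ad(x)(A) \subseteq A$ enter.
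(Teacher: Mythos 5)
Your proof is correct and follows exactly the route the paper intends: the corollary is stated without proof as an immediate consequence of the preceding lemma applied to $\mathcal{L} = \Q_p \otimes_{\Z_p} L$, and your descent argument (purity of $A = L \cap \mathcal{A}$, integrality of the scalar from $\ad(x)(A) \subseteq A$, and rescaling $x$ by a unit) supplies precisely the omitted details. No gaps.
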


\begin{proof}[Proof of Proposition~\ref{pro:sat_groups}]
  Put $d := \dim_{\Q_p} \mathcal{L}$, and suppose that $G$ has
  constant generating number on open subgroups.  For a contradiction
  assume that $d(\mathcal{L}) < d$.  Then $L$ admits an open Lie
  sublattice $M$ which requires less than $d$ generators, say $a_1,
  \ldots, a_r$ where $r<d$.  The saturable group $G$ is recovered from
  the $\Z_p$-Lie lattice $L$ by defining a group multiplication on the
  set $L$ via the Hausdorff series.  We may thus consider the subgroup
  $H := \langle a_1, \ldots, a_r \rangle$ of $G$.  To arrive at the
  required contradiction, it suffices to show that $H$ is open in $G$.
  We find an open saturable subgroup $K$ of $H$ and $k \in \N$ such
  that $a_1^{p^k}, \ldots, a_r^{p^k} \in K$.  The $\Z_p$-Lie lattice
  associated to $K$ is naturally a Lie sublattice of $L$ and contains
  $p^k a_1, \ldots, p^ka_r$.  It is therefore of finite index in $M$
  and hence open in $L$.  Consequently, $K$ is open in~$G$.

  Conversely, suppose that $d(\mathcal{L}) = d$.  Then
  Corollary~\ref{cor:Lie_lattice} shows that $L$ is of a rather
  restricted form.  Clearly, if $L$ is abelian, so is $G$ and $G$ has
  constant generating number on open subgroups.  Suppose that $L$ is
  not abelian so that we have $L = \Z_p x \oplus A$ where $A \cong
  \Z_p^{d-1}$ is an abelian ideal of $L$ and $\ad(x)$ acts on $A$ as
  multiplication by $p^s$ for suitable $s \in \N_0$.  Since $L$ is
  saturable, we have $s \geq 1$ if $p>2$ and $s \geq 2$ if $p=2$; cf.\
  \cite[Section~2]{Kl05}.  Thus $G$ is isomorphic to the group
  $\langle y \rangle \ltimes A$, where $\langle y \rangle \cong \Z_p$
  and $y$ acts on $A$ as scalar multiplication by $1+p^s$.  One checks
  readily that the open subgroups of $G$ are essentially of the same
  form, with the parameter $s$ possibly taking larger values.  This
  explicit description shows that $G$ has constant generating number
  on open subgroups.
\end{proof}

\begin{cor}\label{cor:sat_groups}
  Let $G$ be a saturable pro-$p$ group of dimension $d$. Then $G$ has
  constant generating number on open subgroups if and only if one of
  the following holds:
 \begin{enumerate}
 \item[(i)] $G \cong \Z_p^d$ is abelian;
 \item[(ii)] $G \cong \langle y \rangle \ltimes A$, where $d \geq 2$,
   $\langle y \rangle \cong \Z_p$, $A \cong \Z_p^{d-1}$ and $y$ acts
   on $A$ as scalar multiplication by $1+p^s$ for some $s \geq 1$, if
   $p>2$, and $s \geq 2$, if $p=2$.
 \end{enumerate}
\end{cor}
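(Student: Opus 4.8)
The plan is to read the classification directly off Proposition~\ref{pro:sat_groups} in combination with Corollary~\ref{cor:Lie_lattice}, transporting the conclusion from the $\Z_p$-Lie lattice $L = L_G$ to the group $G$ through the Lie correspondence for saturable pro-$p$ groups. Throughout write $\mathcal{L} = \Q_p \otimes_{\Z_p} L$, so that $\dim_{\Q_p} \mathcal{L} = d$.

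For the forward implication I would first apply Proposition~\ref{pro:sat_groups} to replace the hypothesis that $G$ has constant generating number on open subgroups by the equivalent condition $d(\mathcal{L}) = d$. Corollary~\ref{cor:Lie_lattice} then leaves only two possibilities: either $L \cong \Z_p^d$ is abelian, or $L = \Z_p x \oplus A$ with $A \cong \Z_p^{d-1}$ an abelian ideal on which $\ad(x)$ acts as multiplication by $p^s$ for some $s \in \N_0$. In the abelian case the Hausdorff series reduces to addition, so $G \cong \Z_p^d$, which is alternative~(i). In the non-abelian case $\dim A = d-1 \geq 1$ forces $d \geq 2$; here the group element corresponding to $x$ generates a procyclic complement $\langle y \rangle \cong \Z_p$ to $A$, and its conjugation action on $A$ is given by $\exp(\ad x)$, namely multiplication by the scalar $\lambda = \exp(p^s)$ with $v_p(\lambda - 1) = s$. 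Replacing $y$ by $y^c$ for a suitable unit $c \in \Z_p^\times$ normalises $\lambda$ to $1 + p^s$, giving the presentation $G \cong \langle y \rangle \ltimes A$ of alternative~(ii). Finally, saturability of $G$ forces $s \geq 1$ if $p > 2$ and $s \geq 2$ if $p = 2$, by \cite[Section~2]{Kl05}.

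For the converse I would verify that each group in the list is saturable and has a Lie lattice with $d(\mathcal{L}) = d$, so that the equivalence of Proposition~\ref{pro:sat_groups} applies. For $\Z_p^d$ this is immediate. For the groups in~(ii) the stated bounds on $s$ are exactly the saturability conditions of \cite[Section~2]{Kl05}, and the associated lattice $L = \Z_p x \oplus A$ yields $\mathcal{L} = \Q_p x \oplus (\Q_p \otimes A)$ on which $\ad(x)$ acts as a \emph{non-zero} scalar; a short argument (any generating set must already span the eigenspace $\Q_p \otimes A$) then gives $d(\mathcal{L}) = 1 + \dim_{\Q_p}(\Q_p \otimes A) = d$.

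The only genuine work is the bookkeeping of the non-abelian case: one must check that the integral eigenvalue $p^s$ of $\ad(x)$ corresponds, after exponentiation and rescaling of the generator, to precisely the group scalar $1 + p^s$, and that saturability pins down the range of $s$. Since all of this is already carried out inside the proof of Proposition~\ref{pro:sat_groups}, the corollary is essentially a repackaging of that argument, the new points being merely the constraint $d \geq 2$ in case~(ii) and the explicit realisation of each listed group as a saturable group with $d(\mathcal{L}) = d$.
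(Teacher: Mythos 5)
Your proposal is correct and follows essentially the same route as the paper: the corollary is stated there without separate proof precisely because it is the combination of Proposition~\ref{pro:sat_groups} with Corollary~\ref{cor:Lie_lattice}, and the translation from the lattice eigenvalue $p^s$ to the group scalar $1+p^s$ together with the saturability bounds $s\geq 1$ ($p>2$), $s\geq 2$ ($p=2$) is already carried out verbatim in the second half of the proof of that proposition. The only cosmetic difference is that you verify the converse by computing $d(\mathcal{L})=d$ and reapplying the proposition, whereas the paper checks the open subgroups of the listed groups directly; both are sound.
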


We conclude this section with a technical lemma which plays a central
role in our proof of Theorem~\ref{thm:main_theorem}.

\begin{lem}\label{lem:module}
  Let $\langle z \rangle \cong C_p$ be a cyclic group of order $p$,
  and let $M$ be a finitely generated $\Z_p \langle z \rangle$-module
  which is free as a $\Z_p$-module.  Then $M$ decomposes as
  $$
  M \cong n_1 \cdot I_1 \oplus n_2 \cdot I_2 \oplus n_3 \cdot I_3,
  $$
  where $n_1, n_2, n_3 \in \N_0$ and $I_1$, $I_2$, $I_3$ are
  indecomposable $\Z_p \langle z \rangle$-modules of $\Z_p$-dimensions
  $1$, $p-1$ , $p$ so that $n_1 + (p-1) n_2 + p n_3 = \dim_{\Z_p} M$.

  If the $\Q_p \langle z \rangle$-module $\mathcal{M} := \Q_p \otimes
  M$ satisfies $1+ d(\mathcal{M}) \geq \dim_{\Q_p} \mathcal{M}$, then
  \begin{equation}\label{equ:module}
    1 + \max \{ 0, n_2 - n_1 \} \geq (p-1)(n_2 + n_3).
  \end{equation}
\end{lem}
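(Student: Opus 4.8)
The plan is to reduce the statement to the classical integral representation theory of $C_p$ and then to a generation count over a product of fields. First I would establish the decomposition. Since $x^p - 1 = (x-1)\Phi_p(x)$ in $\Z_p[x]$ with $\Phi_p(x) = 1 + x + \cdots + x^{p-1}$ irreducible over $\Q_p$, the group ring $\Z_p\langle z\rangle \cong \Z_p[x]/(x^p-1)$ is one of the classical examples of a lattice order of finite representation type: up to isomorphism there are exactly three indecomposable $\Z_p\langle z\rangle$-lattices, namely the trivial module $I_1 \cong \Z_p$, the module $I_2 \cong \Z_p[x]/(\Phi_p(x)) \cong \Z_p[\zeta_p]$ of $\Z_p$-rank $p-1$, and the regular module $I_3 \cong \Z_p\langle z\rangle$ of rank $p$ (by the theory of Diederichsen and Reiner; cf.\ Curtis--Reiner). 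Every such lattice is a direct sum of copies of these, which yields $M \cong n_1 I_1 \oplus n_2 I_2 \oplus n_3 I_3$ together with the rank identity $n_1 + (p-1)n_2 + p n_3 = \dim_{\Z_p} M$.

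Next I would pass to $\mathcal{M} = \Q_p \otimes M$. The algebra $\Q_p\langle z\rangle$ is semisimple and splits as a product of two fields, $\Q_p \times \Q_p(\zeta_p)$, of $\Q_p$-dimensions $1$ and $p-1$ respectively. Tensoring the three indecomposables, $\Q_p \otimes I_1 = V_1$ and $\Q_p \otimes I_2 = V_2$ are the two simple $\Q_p\langle z\rangle$-modules (the trivial one of dimension $1$ and the faithful one of dimension $p-1$), while $\Q_p \otimes I_3 \cong V_1 \oplus V_2$. Hence $\mathcal{M} \cong V_1^{\,n_1+n_3} \oplus V_2^{\,n_2+n_3}$, and $\dim_{\Q_p}\mathcal{M} = \dim_{\Z_p} M = n_1 + (p-1)n_2 + p n_3$.

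The crux of the argument, and the one step requiring genuine thought rather than bookkeeping, is the computation of $d(\mathcal{M})$. Over a product of fields $F_1 \times F_2$ a single module element $(w_1,w_2)$ generates $F_1 w_1 \oplus F_2 w_2$, whose component in each factor is at most one-dimensional; consequently the minimal number of generators of any finitely generated module equals the maximum of its two component dimensions. Identifying the $V_1$-isotypic part as a $\Q_p$-space of dimension $n_1 + n_3$ and the $V_2$-isotypic part as a $\Q_p(\zeta_p)$-space of dimension $n_2 + n_3$ (each copy of $V_2$ being one-dimensional over the field $\Q_p(\zeta_p)$), I obtain
$$
  d(\mathcal{M}) = \max\{\,n_1 + n_3,\ n_2 + n_3\,\} = n_3 + \max\{n_1,n_2\}.
$$

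Finally I would substitute into the hypothesis $1 + d(\mathcal{M}) \geq \dim_{\Q_p}\mathcal{M}$, which reads $1 + n_3 + \max\{n_1,n_2\} \geq n_1 + (p-1)n_2 + p n_3$. Writing $\max\{n_1,n_2\} = n_1 + \max\{0, n_2 - n_1\}$, cancelling $n_1$ on both sides, and moving the term $p n_3$ across against $n_3$ reduces this directly to $1 + \max\{0, n_2 - n_1\} \geq (p-1)(n_2 + n_3)$, which is precisely \eqref{equ:module}. I do not anticipate any obstacle beyond correctly invoking the three-fold lattice classification and the generation count over a product of fields; once those two facts are in place the inequality is a one-line rearrangement.
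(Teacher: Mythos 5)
Your proposal is correct and follows essentially the same route as the paper: both invoke the classification of $\Z_p\langle z\rangle$-lattices into the three indecomposables $I_1$, $I_2$, $I_3$ (the paper cites Heller--Reiner, you cite Diederichsen/Curtis--Reiner for the same classical fact), pass to $\mathcal{M} \cong (n_1+n_3)\mathcal{I}_1 \oplus (n_2+n_3)\mathcal{I}_2$ over the semisimple algebra $\Q_p \times \Q_p(\zeta_p)$, compute $d(\mathcal{M}) = \max\{n_1,n_2\} + n_3$, and rearrange. Your explicit justification of the generation count over a product of fields is a detail the paper leaves implicit, but the argument is the same.
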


\begin{proof}
  According to~\cite{HeRe62}, there are three types of indecomposable
  $\Z_p \langle z \rangle$-modules which are free as $\Z_p$-modules:
  \begin{enumerate}
  \item[(i)] the trivial module $I_1 = \Z_p$ of $\Z_p$-dimension $1$,
  \item[(ii)] the module $I_2 = \Z_p \langle z \rangle / (\Phi(z))$ of
    $\Z_p$-dimension $p-1$, where $\Phi$ denotes the $p$th cyclotomic
    polynomial,
  \item[(iii)] the free module $I_3 = \Z_p \langle z \rangle$ of
    $\Z_p$-dimension $p$.
  \end{enumerate}
  The module $M$ decomposes as described as a direct sum of
  indecomposable submodules.

  Note that $\mathcal{I}_1 := \Q_p \otimes_{\Z_p} I_1$ and
  $\mathcal{I}_2 := \Q_p \otimes_{\Z_p} I_2$ are irreducible $\Q_p
  \langle z \rangle$-modules, while $\mathcal{I}_3 := \Q_p
  \otimes_{\Z_p} I_3$ decomposes as $\mathcal{I}_3 \cong \mathcal{I}_1
  \oplus \mathcal{I}_2$.  From this it follows that $\mathcal{M}$
  decomposes as $\mathcal{M} \cong (n_1 + n_3) \cdot \mathcal{I}_1
  \oplus (n_2 + n_3) \cdot \mathcal{I}_2$, and consequently
  $d(\mathcal{M}) = \max \{n_1, n_2\} + n_3$.  Since $\dim_{\Q_p}
  \mathcal{M} = \dim_{\Z_p} M = n_1 + (p-1)n_2 + p n_3$, the
  assumption $1 + d(\mathcal{M}) \geq \dim_{\Q_p} \mathcal{M}$ leads
  to the inequality \eqref{equ:module}.
\end{proof}

For later use we record in Table~\ref{table} the numerical
consequences of this lemma.

\begin{table}[H]
  \begin{center}
    \begin{tabular}{|c|c!{\vrule width 1pt}c|c|c!{\vrule width
          1pt}c|}\hline
      range of $p$ & conditions & $n_1$ & $n_2$ & $n_3$ & case label \\ \hline
      $p=2$ & $n_1 > n_2$ & $\geq 2$ & $0$ & $0$ & (T~2.1) \\
      & & $\geq 2$ & $1$ & $0$ & (T~2.2) \\
      & & $\geq 1$ & $0$ & $1$ & (T~2.3) \\
      & $n_1 \leq n_2$ & $0$ & $\geq 2$ & $0$ & (T~2.4) \\
      & & $1$ & $\geq 1$ & $0$ & (T~2.5) \\
      & & $0$ & $\geq 0$ & $1$ & (T~2.6) \\ \hline
      $p=3$ & $n_1 > n_2$ & $\geq 2$ & $0$ & $0$ & (T~3.1) \\
      & $n_1 \leq n_2$ & $0$ & $1$ & $0$ & (T~3.2) \\ \hline
      $p \geq 5$ & -- & $\geq 2$ & $0$ & $0$ & -- \\ \hline
    \end{tabular}
  \end{center}

  \smallskip

  \caption{Values of $n_1,n_2,n_3 \in \N_0$ satisfying
    the inequality~\eqref{equ:module} and  the condition $n_1 + (p-1)n_2 + p
    n_3 = \dim_{\Q_p} \mathcal{M} \geq 2$.}
  \label{table}
\end{table}


\section{The classification -- proof of
  Theorem~\ref{thm:main_theorem}}

In this section we prove Theorem~\ref{thm:main_theorem}.  First we
present an argument applying to the `generic' case $p \geq 5$.  The
exceptional primes $2$ and $3$ can be treated in a similar way, but
lead to extra complications.  We leave it as an easy exercise to show
that the groups listed in Theorem~\ref{thm:main_theorem} have indeed
constant generating number on open subgroups, apart from a brief
comment on the case (3) in the appropriate subsection below. Clearly,
the assertions of Theorem~\ref{thm:main_theorem} for virtually
pro-cyclic pro-$p$ groups are correct, hence in the proofs below we
only consider groups of dimension at least~$2$.

\subsection{} First suppose that $p \geq 5$. Let $G$ be a finitely
generated pro-$p$ group with constant generating number $d = d(G) \geq
2$ on open subgroups.  Then $G$ has finite rank and thus admits an
open normal subgroup $H$ which is saturable.  We need to show that $G$
is isomorphic to one of the groups listed in (1) and (2) of
Theorem~\ref{thm:main_theorem}, which are saturable.

We argue by induction on $\lvert G : H \rvert$.  If $G = H$, the
claim follows from Corollary~\ref{cor:sat_groups}.  Hence suppose
that $\lvert G : H \rvert \geq p$.  Running along a subnormal
series from $H$ to $G$, we may assume by induction that $\lvert G
: H \vert = p$.  This means that $G = \langle z \rangle H$ with
$G/H = \langle \overline{z} \rangle \cong C_p$.  Moreover, by
Corollary~\ref{cor:sat_groups}, there are essentially two
possibilities for the group $H$.

\smallskip

\noindent \textit{Case 1:} $H$ is abelian.  We regard $H$ as a $\Z_p
\langle \overline{z} \rangle$-module and put $\mathcal{M} := \Q_p
\otimes H$.  It is easily seen that $G$ admits an open subgroup $K$,
containing $z$, which requires at most $1 + d(\mathcal{M})$
generators.  This implies $1 + d(\mathcal{M}) \geq d(K) = d =
\dim_{\Q_p} \mathcal{M}$.  Thus Lemma~\ref{lem:module} and
Table~\ref{table}, for $p=5$, show that $z$ acts trivially on $H$.
Hence $G$ is abelian.  Clearly, this implies that $G \cong \Z_p^d$.

\smallskip

\noindent \textit{Case 2:} $H$ is not abelian.  Then
Corollary~\ref{cor:sat_groups} shows that $H$ is of the form $\langle
y \rangle \ltimes A $, where $\langle y \rangle \cong \Z_p$, $A \cong
\Z_p^{d-1}$ is abelian, normal in $H$ and conjugation by $y$ on $A$
corresponds to scalar multiplication by $1+p^s$ for some $s \geq 1$.
Note that $A$ is characteristic in $H$, hence normal in $G$.  (Indeed,
$A$ is the isolator of the commutator subgroup $[H,H]$ in $H$.) We
consider two separate cases, $z^p \in A$ and $z^p \not \in A$.

\noindent \textit{Case 2.1:} $z^p \in A$.  We contend that the group
$\langle z \rangle A$ has constant generating number $d-1$ on open
subgroups.  Indeed, given an open subgroup $B$ of $\langle z \rangle
A$, we choose a normal open subgroup $N$ of $G$ and $r \in \N$ such
that $d(B) = d(BN/N)$ and $y^{p^r} \in N$.  Then $K := \langle B \cup
\{ y^{p^r} \} \rangle$ is an open subgroup of $G$ with $d(K) \leq d(B)
+ 1$.  But $KN/N = BN/N$ shows that $d(K) \geq d(B)$ and, moreover,
that any minimal generating set of $B$ extends to a minimal generating
set of $K$.  Since $B$ is properly contained in $K$, we deduce that
$d(K) = d(B) + 1$.  From this we get $d(B) = d(K) - 1 = d-1$, as
claimed.

By Case $1$, we conclude that $\langle z \rangle A \cong
\Z_p^{d-1}$ is abelian.  Since $y$ acts as multiplication by $1+p^s$
on $A$ it must also act in the same way on $\langle z \rangle A$, and
hence $G \cong H$ is of the required form.

\noindent \textit{Case 2.2:} $z^p \not \in A$.  Without loss of
generality we may assume that $z^p = y^{p^k} a$ with $k \in \N_0$ and
$a \in A$.  First consider the case $k \geq 1$.  Since $A$ is
characteristic in $H$, $z$ acts on $H/A \cong \Z_p$.  As $p>2$, this
action of $z$ is trivial, i.e.\ $[y,z] \in A$.  Put $z_1 := z^{-1}
y^{p^{k-1}}$.  Then $z_1^p = (z^{-1} y^{p^{k-1}})^p \equiv z^{-p}
y^{p^k} \equiv 1$ modulo $A$, i.e.\ $z_1^p \in A$.  Replacing $z$ by
$z_1$ we return to Case 2.1.

Finally suppose that $k = 0$.  Replacing $y$ by $ya$, we may assume
that $z^p = y$.  Since $A$ is characteristic in $H$, we conclude that
$G = \langle z \rangle \ltimes A$.  Regarding $A$ as a $\Z_p \langle z
\rangle$-module, we put $\mathcal{M} := \Q_p \otimes A$.  It is easily
seen that $G$ admits an open subgroup $K$, containing $z$, which
requires at most $1 + d(\mathcal{M})$ generators.  Since $G$ has
constant generating number $d$ on open subgroups, it follows that
$d(\mathcal{M}) = d-1 = \dim_{\Q_p} \mathcal{M}$.  This implies that
$z$ must operate as multiplication by a scalar $\lambda \in \Z_p^*$ on
$A$.  Since $p>2$, we can replace $z$ by a suitable power of itself
which acts as multiplication by $1+p^s$ for some $s \in \N$.

\subsection{} Next we deal with the `exceptional' case $p=3$.  A short
argument shows that the pro-$3$ group $G_0 = \langle w \rangle \ltimes
B$ of maximal class, defined in (3) of Theorem~\ref{thm:main_theorem},
has constant generating number $2$ on open subgroups.  Indeed, every
open subgroup of $G_0$ requires at least $2$ generators, because $G_0$
is not virtually pro-cyclic.  Moreover, putting $\pi := \omega-1$, the
filtration $\pi^k B$, $k \in \N_0$, of the subgroup $B$ is tight: the
index $\lvert \pi^k B : \pi^{k+1} B \rvert$ between any two
consecutive terms is equal to $3$.  Hence the open subgroups of $G_0$
not contained in $B$ are of the form $\langle w_1 \rangle \ltimes
\pi^k B$ for some $w_1 \in wB$ of order $3$ and $k \in \N_0$. These
groups are all isomorphic to $G_0$ and require no more than $2$
generators.  Open subgroups contained in $B \cong \Z_3^2$ obviously
require no more than $2$ generators.

Now let $G$ be a finitely generated pro-$3$ group with constant
generating number $d = d(G) \geq 2$ on open subgroups.  Similarly as
in the generic case (i.e.\ $p \geq 5$), Corollary~\ref{cor:sat_groups}
provides the starting point for an induction argument.  We consider an
open normal subgroup $H$ of index $3$ in $G$, and we write $G =
\langle z \rangle H$.  Inductively, we assume that $H$ is isomorphic
to one of the groups listed in (1), (2) and (3) of
Theorem~\ref{thm:main_theorem}, and we need to prove that the same
holds true for $G$.  As before, there are several cases to consider.

\smallskip

\noindent \textit{Case 1:} $H$ is abelian.  Similarly as in the
generic case, we regard $H$ as a $\Z_3 \langle \overline{z}
\rangle$-module and put $\mathcal{M} := \Q_3 \otimes H$.  Since $G$
admits an open subgroup $K$, containing $z$, which requires at most $1
+ d(\mathcal{M})$ generators, we have $1 + d(\mathcal{M}) \geq d(K) =
d = \dim_{\Q_3} \mathcal{M}$.  Thus Lemma~\ref{lem:module} and
Table~\ref{table} for $p=3$ show that there are two possibilities.  If
$z$ acts trivially on $H$, corresponding to case (T~3.1) in
Table~\ref{table}, then $G \cong \Z_3^d$ is abelian as in the generic
case.  Now suppose that $z$ does not act trivially on $H$.  Then $H$
is indecomposable as a $\Z_3 \langle \overline{z} \rangle$-module and
of $\Z_3$-dimension $2$, corresponding to case (T~3.2) in
Table~\ref{table}.  Since $H$ contains no non-trivial elements which
are fixed under conjugation by $z$, we must have $z^3 =
1$. Consequently, $G$ is isomorphic to the pro-$3$ group of maximal
class described in (3) of Theorem~\ref{thm:main_theorem}.

\smallskip

\noindent \textit{Case 2:} $H$ is not abelian.  First we consider the
`new' case that $H = \langle w \rangle \ltimes B$ is a pro-$3$ group
of maximal class, as described in (3) of
Theorem~\ref{thm:main_theorem}.  Being the unique maximal torsion-free
subgroup of $H$, the subgroup $B$ is characteristic in $H$ and hence
normal in $G$.  Because $H/B \cong C_3$ does not admit automorphisms
of order $3$, the element $z$ acts trivially on $H/B$ and thus
conjugation by $z$ preserves the $\Z_3[\omega]$-structure of $B$.  But
$B$ is a free $\Z_3[\omega]$-module and its automorphisms are given by
multiplication by units of the ring $\Z_3[\omega]$.  The torsion
subgroup of the unit group $\Z_3[\omega]^*$ is $\langle \omega \rangle
\cong C_3$.  Replacing $z$ by $zw$ or $zw^2$, if necessary, we may
assume that $z$ acts trivially on~$B$.  But then $\langle z \rangle B
= \Cen_G(B)$ is an abelian normal subgroup of index $3$ in $G$ and,
returning to Case 1, we find out that $G$ is, in fact, isomorphic to
$H$ and covered by (3) of Theorem~\ref{thm:main_theorem}.

It remains to consider, similarly as in the generic case (i.e.\ $p
\geq 5$), the situation for $H$ of the form $\langle y \rangle \ltimes
A$, where $\langle y \rangle \cong \Z_3$, $A \cong \Z_3^{d-1}$ is
abelian and conjugation by $y$ on $A$ corresponds to scalar
multiplication by $1+3^s$ for some $s \geq 1$.  As before, $A$ is
characteristic in $H$, hence normal in $G$, and we consider two cases.

\noindent \textit{Case 2.1:} $z^3 \in A$.  As explained in the generic
case, the group $\langle z \rangle A $ has constant generating number
$d-1$ on open subgroups.  By Case~$1$, one of the following holds:
$\langle z \rangle A \cong \Z_3^{d-1}$ is abelian, which can be dealt
with as before, or $\langle z \rangle A$ is a pro-$3$ group of maximal
class.  We now show that the latter option leads to a contradiction
and hence does not occur.  Without loss of generality, we may assume
that $z$ acts on $A = \Z_3 + \Z_3 \omega$ as multiplication by a third
root of unity $\omega$.  Then $yz$ operates on $A$ as multiplication
by $\omega (1+3^s)$.  Thus $\langle yz, a \rangle$, for any
non-trivial element $a \in A$ is an open subgroup of $G$ which
requires less than $3$ generators, a contradiction.

\noindent \textit{Case 2.2:} $z^3 \not \in A$.  The argument goes
through as in the generic case (i.e.\ $p \geq 5$).

\subsection{} Finally, we consider the remaining `exceptional' case
$p=2$.  Let $G$ be a finitely generated pro-$2$ group with constant
generating number $d = d(G) \geq 2$ on open subgroups.  Similarly as
before, suppose that $H$ is an open normal subgroup of index $2$ in $G
= \langle z \rangle H$ and that we inductively understand $H$.  There
are, once more, several cases to consider.  In several places we will
use the basic equality $\log_2 \lvert G : G^2 \rvert = d(G) = d$,
which relies on the fact that $G^2 = \langle g^2 \mid g \in G \rangle$
is equal to the Frattini subgroup $\Phi(G)$ of $G$.

\smallskip

\noindent \textit{Case 1:} $H \cong \Z_2^d$ is abelian.  Similarly as
in the situations considered previously (i.e.\ for $p \geq 5$ and
$p=3$), we regard $H$ as a $\Z_2 \langle \overline{z} \rangle$-module
and put $\mathcal{M} := \Q_2 \otimes H$.  As before, one shows that $1
+ d(\mathcal{M}) \geq \dim_{\Q_2} \mathcal{M}$.  Thus
Lemma~\ref{lem:module} and Table~\ref{table} for $p=2$ imply that
there are six possibilities, labelled (T~2.1) up to (T~2.6).

Clearly, the case (T~2.1) can be dealt with as before: $G$ is seen to
be abelian.  Next we show that the case (T~2.2) leads to a contradiction.
Indeed, if $z$ acts on $H = \langle x_1, \ldots, x_d \rangle$
according to $x_i^z = x_i$ for $1 \leq i \leq d-1$ and $x_d^z =
x_d^{-1}$, where $d \geq 3$, then $\log_2{\lvert G : G^2 \rvert} = d(G) = d$
implies that $z^2 \not \in H^2$. Thus we may assume without loss of
generality that $z^2 = x_1$. But then $K := \langle z, x_2, \ldots,
x_{d-2}, x_{d-1} x_d \rangle$ is an open subgroup of $G$ with
$d(K) = d-1$, a contradiction.

Now we show that the case (T~2.3) also leads to a contradiction.
Indeed, suppose that $z$ acts on $H = \langle x_1, \ldots, x_d
\rangle$ according to $x_1^z = x_2$, $x_2^z = x_1$ and $x_i^z = x_i$
for $3 \leq i \leq d$, where $d \geq 3$.  Since $z^2$ commutes with
$z$, we have $z^2 \in \langle x_1 x_2, x_3, \ldots, x_d \rangle$.  If
$z^2 \not \in \langle x_1 x_2 \rangle$, then $G$ admits an open
subgroup $K$ generated by $z$, $x_1$ and $d-3$ of the elements $x_3,
\ldots, x_d$, in contradiction to the requirement $d(K)=d$.  Now
suppose that $z^2 \in \langle x_1 x_2 \rangle$.  Then $z^2 = (x_1
x_2)^\lambda$ for $\lambda \in \Z_2$, and $z_1 := z x_1^{-\lambda}$
satisfies $z_1^2 = z^2 x_2^{-\lambda} x_1^{-\lambda} = 1$.  Replacing
$z$ by $z_1$, if necessary, we may assume that $z^2 = 1$.  But then
$G$ admits the open subgroup $K = \langle z, x_1 x_2, x_1^2, x_3,
\ldots, x_d \rangle$ with $d(K) = d+1$, a contradiction.


Next we show that the case (T~2.4) again leads to a contradiction.
Indeed, if $z$ acts on $H = \langle x_1, \ldots, x_d \rangle$
according to $x_i^z = x_i^{-1}$ for $1 \leq i \leq d$, where $d \geq
2$, then $\log_2{\lvert G : G^2 \rvert} = d(G) = d$ implies that $z^2 \not \in
H^2$.  But $z$ commutes with $z^2 \in H$ so that $z^2 =1$, a
contradiction.

Now we consider the case (T~2.5): $z$ acts on $H = \langle x_1,
\ldots, x_d \rangle$ according to $x_1^z = x_1$ and $x_i^z = x_i^{-1}$
for $2 \leq i \leq d$, where $d \geq 2$.  Since $\log_2{\lvert G : G^2 \rvert}
= d(G) = d$, we have $z^2 \not \in H^2$ and we may assume that $z^2 =
x_1$.  This shows that $G$ is isomorphic to the group in (4)
of Theorem~\ref{thm:main_theorem}.

Finally, we treat the case (T~2.6): $z$ acts on $H = \langle x_1,
\ldots, x_d \rangle$ according to $x_1^z = x_2$, $x_2^z = x_1$ and
$x_i^z = x_i^{-1}$ for $3 \leq i \leq d$, where $d \geq 2$.  Since
$z^2$ commutes with $z$, we have $z^2 \in \langle x_1 x_2 \rangle$.
We can now argue similarly as in the case (T~2.3): $z^2 = (x_1
x_2)^\lambda$ for $\lambda \in \Z_2$, and $z_1 := z x_1^{-\lambda}$
satisfies $z_1^2 = z^2 x_2^{-\lambda} x_1^{-\lambda} = 1$.  Replacing
$z$ by $z_1$, if necessary, we may assume that $z^2 = 1$.  But then
$G$ admits the open subgroup $K = \langle z, x_1 x_2, x_1^2, x_3,
\ldots, x_d \rangle$ with $d(K) = d+1$, a contradiction.


\smallskip

\noindent \textit{Case 2:} $H$ is not abelian. First we deal with the
case that $H$ (and hence $G$) is virtually abelian; this means that
$H$ is of the form described in (4) of Theorem~\ref{thm:main_theorem}.
In this case we show that $G$ has an abelian normal subgroup of index
$2$ so that we can return to Case~1.  Indeed, let $A \cong \Z_2^d$ be
a maximal open abelian normal subgroup of $G$.  For a contradiction,
we may assume that $\lvert G:A \rvert = 4$.  There are two
possibilities: $G/A$ is either cyclic or a Klein $4$-group.

If $G/A = \langle \overline{z} \rangle$ is cyclic of order $4$, then
$\overline{z}$ acts on $A$ as an element of order $4$.  Consequently,
the $\Q_2 \langle \overline{z} \rangle$-module $\mathcal{M} := \Q_2
\otimes A$ admits at least one irreducible submodule $\mathcal{J}$ of
dimension $2$ featuring the eigenvalues $\pm \sqrt{-1}$ for
$\overline{z}$.  This means that $\mathcal{M}$ requires at most $d-1$
generators.  Moreover, by Case~1, the open subgroup $\langle z^2
\rangle A$ must be isomorphic to the group described in (4) of
Theorem~\ref{thm:main_theorem}.  The structure of this group shows
that $z^4 \in \mathcal{M} \setminus \mathcal{J}$, thus providing one
extra module generator.  Consequently, $G$ admits an open subgroup
$K$, containing $z$, which requires at most $1 + (d-1) - 1 = d-1$
generators, a contradiction.

Now suppose that $G/A = \langle \overline{w}, \overline{z} \rangle
\cong C_2 \times C_2$.  Since $A$ is a maximal open abelian normal
subgroup of $G$, the isomorphism types of the three larger subgroups
$\langle w \rangle A$, $\langle z \rangle A$ and $\langle wz \rangle
A$ of $G$ are limited by Case~1: each of these three groups is
isomorphic to the group described in (4) of
Theorem~\ref{thm:main_theorem}.  This shows that the eigenvalues
associated to the action of each of the three elements $\overline{w}$,
$\overline{z}$, $\overline{w} \overline{z}$ on the $\Q_2$-vector space
$\Q_2 \otimes A$ are: $1$, with multiplicity $1$, and $-1$, with
multiplicity $d-1$. Moreover, $w^2$, $z^2$, $(wz)^2$ each give a
generator of $A$, i.e.\ an element of $A \setminus A^2$.  The action
of the abelian group $\langle \overline{w}, \overline{z} \rangle$ on
$\Q_2 \otimes A$ can, of course, be represented by diagonal matrices.
The described eigenvalue spectra of $\overline{w}$, $\overline{z}$,
$\overline{w} \overline{z}$ thus show that $\dim_{\Q_2} \Q_2 \otimes A
= 3$ and that the action of $\overline{w}$, $\overline{z}$ and
$\overline{w} \overline{z}$ on $\Q_2 \otimes A$ with respect to a
suitable $\Q_2$-basis is given by the matrices
$$
\begin{pmatrix}
  1 & 0  & 0 \\
  0 & -1 & 0 \\
  0 & 0  & -1
\end{pmatrix}, \quad
\begin{pmatrix}
 -1 & 0  & 0 \\
  0 & 1  & 0 \\
  0 & 0  & -1
\end{pmatrix}, \quad
\begin{pmatrix}
 -1 & 0  & 0 \\
  0 & -1 & 0 \\
  0 & 0  & 1
\end{pmatrix}.
$$
But then $w^2$, $z^2$ and $(wz)^2$, which as explained each give rise to a
generator of $A$, actually form a basis for $\Q_2 \otimes A$.  This
shows that $G = \langle w,z \rangle$ requires only $2$ generators, a
contradiction.

It remains to consider the situation where $H$ is not virtually
abelian, and thus isomorphic to one of the groups in (2) of
Theorem~\ref{thm:main_theorem}: $H$ is of the form $\langle y \rangle
\ltimes A $, where $\langle y \rangle \cong \Z_2$, $A \cong
\Z_2^{d-1}$ is abelian, normal in $H$ and conjugation by $y$ on $A$
corresponds to scalar multiplication by $\pm (1+2^s)$ for some $s \geq
2$.  As in the previous cases (i.e.\ $p \geq 5$ and $p=3$), $A$ is
characteristic in $H$, hence normal in $G$.  We consider two separate
cases.

\noindent \textit{Case 2.1:} $z^2 \in A$.  As before, the group
$\langle z \rangle A$ has constant generating number $d-1$ on its open
subgroups.
If the group $\langle z \rangle A$ is not abelian, then it is of the
form described in (4) of Theorem~\ref{thm:main_theorem}, with $z$
acting as $\overline{z}$ of order $2$ on $A$.  The $\Q_2 \langle
\overline{z} \rangle $-module $\mathcal{M} := \Q_2 \otimes A$ is isomorphic to
$\mathcal{I}_1 \oplus (d-2) \mathcal{I}_2$, where we are using the
notation introduced in the proof of Lemma~\ref{lem:module}.  Thus
$d(\mathcal{M}) = d-2$.  We observe that this number does not change,
if we regard $\mathcal{M}$ as a $\Q_2 \langle yz \rangle$-module
rather than a $\Q_2 \langle z \rangle$- or $\Q_2 \langle \overline{z}
\rangle$-module.  This shows that $G$ contains an open subgroup $K$,
containing $yz$, such that $d(K) = d-1$, a contradiction.  Thus
$\langle z \rangle A \cong \Z_2^{d-1}$ is abelian.  Since $y$ acts as
multiplication by $\pm (1+2^s)$ on $A$ it must also act in the same
way on $\langle z \rangle A$, and hence $G \cong H$ is of the required
form.

\noindent \textit{Case 2.2:} $z^2 \not \in A$.  Without loss of
generality we may assume that $z^2 = y^{2^k} a$ with $k \in \N_0$ and
$a \in A$.  First consider the case $k \geq 1$.  Since $A$ is
characteristic in $H$, $z$ acts on $H/A \cong \Z_2$ and there are two
possibilities: either $y^z \equiv y$ or $y^z \equiv y^{-1}$ modulo
$A$. If $z$ acts trivially on $H/A$, we return to Case 2.1, just as in
the generic case (i.e.\ $p=5$).  We claim that the second possibility
does not occur.  Indeed, suppose that $y^z \equiv y^{-1}$ modulo $A$.
Then, for every $a \in A$, we have
$$
\pm (1+2^s) a^z = (a^y)^z = (a^z)^{y^z} = (a^z)^{y^{-1}} = \pm
(1+2^s)^{-1} a^z.
$$
This implies that $(1+2^s)^2 = 1$, in contradiction to $s \geq 2$.

Finally suppose that $k = 0$.  Similarly as in the generic case
(i.e.\ $p=5$), we may assume without loss of generality that $z^2 = y$
and we subsequently reduce to $G = \langle z \rangle \ltimes A$, with
$z$ operating as multiplication by a scalar $\lambda \in \Z_2^*$ on $A
\cong \Z_2^{d-1}$.  This implies that $z^2 = y$ acts on $A$ as
multiplication by $\lambda^2 = 1+2^s$ where $s \geq 3$.  Hence, in a
final step we can replace $z$ by a suitable power of itself which acts
as multiplication by $\pm (1+2^{s-1})$.

\bigskip

\ackn The second author started his investigation of Iwasawa's
question as a PhD student at Binghamton University.  He thanks his
adviser Prof.\ Marcin Mazur for continuous support and helpful
discussions.

\bibliographystyle{plain}

\end{document}